\documentclass[11pt]{amsart}

\usepackage{amsmath,amssymb,amsthm}

\newtheorem{theorem}{Theorem}[section]
\newtheorem{proposition}[theorem]{Proposition}
\newtheorem{lemma}[theorem]{Lemma}

\theoremstyle{definition}

\newcommand{\E}{\mathbb E}

\newcommand{\Pcal}{\mathcal P}
\newcommand{\Ecal}{\mathcal E}
\newcommand{\one}{\mathbf 1}

\begin{document}

\title{A note on large clusters of $E_2$-numbers}

\author{Genheng Zhao}
\address{Academy of Mathematics and Systems Science, Chinese Academy of Sciences,\\
55 Zhongguancun East Road, Haidian District, Beijing 100190, China}
\email{zhaogenheng@amss.ac.cn}

\subjclass[2020]{Primary 11N25; Secondary 11N36}
\keywords{$E_2$-numbers, Maynard--Tao sieve, bounded gaps,
Bombieri--Vinogradov theorem, variational inequality}

\begin{abstract}
Let $q_n$ denote the $n$th product of two distinct primes.  By completing a
square in Sono's sieve and solving the resulting one-dimensional variational
problem, we prove unconditionally that, for every $\varepsilon>0$ and all
sufficiently large integers $\rho$,
\[
 \liminf_{n\to\infty}(q_{n+\rho}-q_n)
 \leq \exp\left((\pi+\varepsilon)\sqrt\rho\right).
\]
\end{abstract}

\maketitle

\section{Introduction}

An $E_2$-number is a product of two distinct primes.  Write
$q_1<q_2<\cdots$ for the $E_2$-numbers and set
\[
 H_\rho(E_2)=\liminf_{n\to\infty}(q_{n+\rho}-q_n).
\]
Here and below $\rho$ is a positive integer.  The sequence of $E_2$-numbers
is a natural almost-prime setting for sieve methods.  Goldston, Graham,
Pintz, and Y\i ld\i r\i m applied the GPY sieve to this sequence, proving an
unconditional bound of $6$ for consecutive gaps and a general large-cluster
estimate \cite{GGPY}.  Maynard's multidimensional refinement of the GPY
sieve \cite{Maynard} was subsequently adapted by Sono to a sifted
$E_2$-minorant \cite{Sono}.  Sono proved the general estimate
\[
 H_\rho(E_2)\leq
 \exp\left(\frac{(2+\varepsilon)\rho}{3\theta\log\rho}\right)
\]
under level-$\theta$ distribution hypotheses \cite[Theorem 1.1]{Sono}, and
an observation of Neshime recorded in \cite[Remark 7.1]{Sono} gives
\[
 H_\rho(E_2)\ll
 \sqrt\rho\exp\left(\sqrt{\frac{8\rho}{\theta}}\right).
\]
Thus the latter has exponential constant $4$ when $\theta=\frac12$.
Liu, Park, and Song extended related bounded-gap results to products of a
general number of distinct primes \cite{LPS}; Goldston, Panidapu, and
Schettler later obtained explicit bounds for small fixed clusters within
Sono's framework \cite{GPS}.  We show that the unconditional exponential
constant $4$ above may be replaced by $\pi$.

\begin{theorem}\label{thm:main}
Let $0<\theta\leq1$, and assume
$\mathrm{BV}[\theta,\Pcal]$ and $\mathrm{BV}[\theta,E_2]$ in the sense used by
Sono.  For every $\varepsilon>0$ and all sufficiently large integers $\rho$,
\[
 H_\rho(E_2)
 \leq
 \exp\left(\left(\frac{\pi}{\sqrt{2\theta}}+\varepsilon\right)
 \sqrt\rho\right).
\]
In particular, using the level $\theta=\frac12$ with an arbitrarily small
loss, the unconditional consequence is
\[
 H_\rho(E_2)\leq
 \exp\left((\pi+\varepsilon)\sqrt\rho\right).
\]
\end{theorem}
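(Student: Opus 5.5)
We follow Sono's adaptation of the Maynard--Tao sieve and aim at the criterion: if the sieve weights attain ratio $\sum_n w_n \sum_{i}\beta(n+h_i)\big/\sum_n w_n>\rho$ for an admissible $k$-tuple $\mathcal H=\{h_1,\dots,h_k\}$, where $\beta$ is Sono's sifted $E_2$-minorant, then $H_\rho(E_2)\le \operatorname{diam}\mathcal H$. Here we assume $\mathrm{BV}[\theta,\Pcal]$ and $\mathrm{BV}[\theta,E_2]$. We take $\mathcal H$ to consist of the first $k$ primes exceeding $k$. Then $\operatorname{diam}\mathcal H\ll k\log k$, and the theorem reduces to finding a sieve function $F$ on the simplex for which the $E_2$-ratio exceeds $\rho$ with $k=\exp((\pi/\sqrt{2\theta}+\varepsilon/2)\sqrt\rho)$. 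Equivalently, we need the ratio to be at least $(2\theta/\pi^2)(\log k)^2(1-o(1))$.

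The plan has four steps.

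\textbf{Step 1.} Recall Sono's main-term formula. The $E_2$-contribution from the $i$th coordinate is a double integral in which a prime factor $p=(n+h_i)^{t}$ is removed with weight $dt/(t(1-t))$, say over $t\in[\delta,1/2]$. Via $\mathrm{BV}$ at level $\theta$ this produces $J_i(F)$-type quantities. Each is a square of the $i$th marginal of $F$, shifted by the variable $t$, weighted by $\log$-type density factors.

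\textbf{Step 2.} Following the abstract, complete the square. Write the $i$th term as $\int\bigl(\int F\,dt_i\bigr)^2$ plus a cross term. Then choose $F$ of the product form $F(t_1,\dots,t_k)=\prod_j g(kt_j)$ supported on $\sum t_j\le 1$, as in Maynard's large-$k$ argument. Here $g$ is concentrated at scale $1/k$ and $\int_0^\infty u\,g(u)^2\,du$ is controlled. After concentration of measure (Chebyshev, since $\sum t_j$ is sharply concentrated around its mean), the $k$-dimensional problem collapses to a one-dimensional quadratic functional in $g$, times the available length $\theta$ coming from the support restriction.

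\textbf{Step 3.} Solve the resulting variational problem. We expect a Rayleigh quotient of the shape
\[
 \sup_g \frac{\bigl(\int_0^{T} g(u)\,du\bigr)^2\cdot\Phi}{\int_0^T g(u)^2\,du},
\]
with $\Phi$ the integral of $dt/(t(1-t))$ over a range of length about $\log(\theta\log k)$. More precisely, after the substitution $u=e^{s}$, the kernel coming from the prime-factor weight becomes a Hilbert-type kernel. Its top eigenvalue is governed by an Euler--Lagrange equation $g''+\lambda g=0$ on an interval of length $L\asymp\log k$ with Dirichlet/Neumann conditions. This gives an eigenfunction $\cos(\pi s/2L)$, hence the factor $\pi^2$.

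One then checks that the optimal value equals $(2\theta/\pi^2)(\log k)^2(1+o(1))$. Setting this equal to $\rho$ gives $\log k=(\pi/\sqrt{2\theta})\sqrt\rho\,(1+o(1))$.

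\textbf{Step 4.} Absorb $\log\operatorname{diam}\mathcal H=\log k+O(\log\log k)$ into the $\varepsilon$. This yields the theorem. The unconditional case takes $\theta=1/2-\eta$, using Bombieri--Vinogradov for primes and its $E_2$ analogue, and lets $\eta\to0$.

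\textbf{Main obstacle.} The main difficulty is Step 2/3: making the square completion exact enough that the error terms (the tails from concentration, the lower cutoff $\delta$, and the cross terms between the prime and $E_2$ contributions) are $o((\log k)^2)$. It also requires identifying the one-dimensional operator precisely so that its sharp constant is $\pi^2$ rather than the $8$ implicit in Neshime's bound. I would first try the explicit test function $g(e^{s})=\cos(\pi s/(2L))$ truncated at $L=\theta\log k/\text{const}$, and verify the lower bound directly by computation. Only afterwards would I seek to justify optimality, which the theorem does not need.
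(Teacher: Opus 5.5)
Your outer framework matches the paper: Sono's criterion, square completion, the product weight $\prod_i g(ku_i)$ with Chebyshev concentration, the first $k$ primes above $k$, and $\theta\uparrow\frac12$. However, Steps~2--3 assert the constant rather than derive it, and the one-dimensional problem you describe is not the right one. After Lemma~\ref{lem:square} and \eqref{eq:good-region-identity} one gets
\[
 \frac{\Ecal_{k,\eta}(F)}{I_k(F)}\geq(1-o(1))\frac{\theta^2}{4\int g^2}\left(\int_{1/k}^{k}\frac{H(s)^2\,ds}{s(1-\frac{\theta s}{2k})}+\log\frac{2-\theta}{\theta}\right),
 \qquad H(s)=\int_0^s g,\ \int g=1.
\]
This is a quadratic form in $g$ with kernel $\log(k/\max(a,b))$, a Green's function, which is why a second-order equation appears. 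It is neither $(\int g)^2\Phi/\int g^2$ nor a Hilbert-type kernel. More importantly, this quotient means nothing without the side constraint $\int s\,g^2<\int g^2$ imposed by the simplex concentration: without it, $H(s)=\min(s/k,1)$ already gives a quotient of order $k$. Your Dirichlet--Neumann eigenvalue problem on an interval of length $L$ does not encode this constraint, so it cannot fix the constant. In the paper's parametrization $H(s)=f(\log(1+As)/A)$, the ratio $\Ecal_{k,\eta}(F)/(\frac{\theta}{2}I_k(F))$ becomes $\frac{\theta A}{2r^2}(\log k-A(1-S))$ up to lower-order terms, with $r=f'(0)$. The constraint becomes $E=\int_0^1 f'^2<r^2$. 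So the normalization $I_k$ sees only the endpoint slope, whereas concentration sees the Dirichlet energy.

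This is exactly where your test function loses. The sine profile $f(y)=\sin\frac{\pi y}{2}$ (your cosine for $g$ in the log variable) has $E(1-S)=\frac{\pi^2}{16}$, but $f'(0)^2=\frac{\pi^2}{4}=2E$. With the inner scale tied to the length of the logarithmic range as above, the best choice $A=\log k$ yields only the coefficient $\frac{\theta}{4r^2}=\frac{\theta}{\pi^2}$ of $(\log k)^2$. That gives the exponent $\pi/\sqrt\theta$, which is $\pi\sqrt2\approx4.44$ at $\theta=\frac12$ --- worse than Neshime's $4$. Read literally, $g(e^s)=\cos(\pi s/2L)$ also drops the Jacobian $e^{-s}$; then $\int u\,g^2/\int g^2$ is of order $e^{L}$, and concentration fails outright. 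The missing idea is to decouple the endpoint slope from the Dirichlet energy, so that $r^2\downarrow E$ while $E<r^2$ is kept. The paper does this with the Hermite boundary layer of Lemma~\ref{lem:boundary}, then takes $A=c\log k$ with $c=\frac{1}{2(1-S)}$, getting $\frac{\theta}{8r^2(1-S)}\to\frac{2\theta}{\pi^2}$. Tuning the inner scale independently, e.g.\ $y=\log(1+\lambda Bs)/B$ with $\lambda\downarrow E/r^2$, would serve the same purpose. Your sketch has no such device. Your choice $L=\theta\log k/\mathrm{const}$ is also off: the optimum is $L\approx\log k$, with $\theta$ entering only through the prefactor.
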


The proof has three ingredients.  First, Sono's $J-L-M$ functional is
rewritten as an exact positive square plus a nonnegative $J$-term.  Second, a
logarithmically reparametrized product weight reduces the resulting lower
bound to a one-dimensional problem depending on
\[
 r=f'(0),\quad E=\int_0^1 f'(y)^2\,dy,\quad
 S=\int_0^1 f(y)^2\,dy.
\]
Third, the sharp inequality
\[
 E(1-S)\geq\frac{\pi^2}{16}
 \quad\text{when}\quad f(0)=0, f(1)=1, S\leq\frac12,
\]
determines the optimal constant within this product family.  A boundary-layer
construction makes the strict concentration condition $E<r^2$ compatible
with the sharp value.  The remaining sections justify the smooth
approximation and the order of all limiting operations.

\section{Notation and distribution hypotheses}

An $E_2$-number in this paper is an integer $p_1p_2$ with $p_1$ and $p_2$
distinct primes.  Let $\Pcal$ denote the set of primes, let $\mu$ be the
M\"obius function, and let $\varphi$ be Euler's totient function.  The first
few $E_2$-numbers are $6,10,14,15,21,22,\ldots$.  For a set $\mathcal A$,
$\one_{\mathcal A}$ denotes its indicator function.  Sono does
not insert the characteristic function of all $E_2$-numbers directly into
the sieve.  For fixed $0<\eta<\frac14$ and $N$ large, he uses the sifted
minorant
\[
 \beta_{N,\eta}(n)=
 \begin{cases}
 1,&n=p_1p_2,\quad N^\eta<p_1\leq N^{\frac12}<p_2,\\
 0,&\text{otherwise}.
 \end{cases}
\]
Every integer counted by $\beta_{N,\eta}$ is an $E_2$-number, so positivity
for this minorant is sufficient for a gap result for the full sequence.
The restriction on the smaller prime is also the source of the lower
$\xi$-cutoff in the $L$ and $M$ integrals.  This is precisely the definition
in \cite[(1.10)]{Sono}.

For $(a,q)=1$, put
\begin{align*}
 \pi_\beta(N;q,a)&=
 \sum_{\substack{N<n\leq2N\\n\equiv a\pmod q}}\beta_{N,\eta}(n),\\
 \pi_{\beta,q}(N)&=
 \sum_{\substack{N<n\leq2N\\(n,q)=1}}\beta_{N,\eta}(n).
\end{align*}
For the prime sequence, put
\begin{align*}
 \pi^\flat(N)&=\#\{p\text{ prime}:N\leq p<2N\},\\
 \pi^\flat(N;q,a)&=
 \#\{p\text{ prime}:N\leq p<2N,\ p\equiv a\pmod q\}.
\end{align*}
Sono's hypothesis $\mathrm{BV}[\theta,E_2]$ says that, for fixed $\eta$, every
$\varepsilon>0$, and every $B>0$,
\begin{equation}\label{eq:BV-E2}
 \sum_{q\leq N^{\theta-\varepsilon}}\mu(q)^2
 \max_{(a,q)=1}
 \left|\pi_\beta(N;q,a)-\frac{\pi_{\beta,q}(N)}{\varphi(q)}\right|
 \ll_{B,\eta}\frac{N}{(\log N)^B}.
\end{equation}
The prime hypothesis $\mathrm{BV}[\theta,\Pcal]$ is the statement that, for
every $\varepsilon>0$ and $B>0$,
\begin{equation}\label{eq:BV-prime}
 \sum_{q\leq N^{\theta-\varepsilon}}\mu(q)^2
 \max_{(a,q)=1}
 \left|\pi^\flat(N;q,a)-\frac{\pi^\flat(N)}{\varphi(q)}\right|
 \ll_B\frac{N}{(\log N)^B}.
\end{equation}
These are exactly the conventions in \cite[(1.11) and (1.12)]{Sono}.
Notice that Sono uses $N<n\leq2N$ in the definition of the
$\beta$-counts, but $N\leq n<2N$ in the sieve sum below.  We retain those
conventions; changing either endpoint affects at most endpoint terms and has
no bearing on the asymptotics.  The ordinary Bombieri--Vinogradov theorem and
Motohashi's convolution theorem \cite{Motohashi} supply these hypotheses at level
$\theta=\frac12$ in Sono's convention.  In applications below, $k$ and hence
$\eta=\frac{\theta}{2k^2}$ are fixed before $N\to\infty$; the dependence on
$\eta$ in \eqref{eq:BV-E2} is therefore harmless.

\section{Sono's positivity functional}

Put
\[
 \mathcal R_k=\left\{\boldsymbol u\in[0,\infty)^k:
 \sum_{i=1}^k u_i\leq1\right\},
 \quad I_k(F)=\int_{\mathcal R_k}F(\boldsymbol u)^2\,d\boldsymbol u.
\]
Let $\mathcal H=\{h_1,\ldots,h_k\}$ be admissible.  Thus, for every prime
$p$, the residues $h_1,\ldots,h_k$ do not cover all residue classes modulo
$p$.  Set
\[
 D_0=\log\log\log N,\quad W=\prod_{p\leq D_0}p,
\]
and choose $\nu_0$ so that $(\nu_0+h_m,W)=1$ for every $m$.  For a fixed
$\delta>0$, put $R=N^{\frac{\theta}{2}-\delta}$.  A smooth function $F$
supported on $\mathcal R_k$ determines Sono's Maynard--Selberg coefficients
and the nonnegative weight
\[
 w_n=\left(\sum_{d_m\mid n+h_m\ (1\leq m\leq k)}
 \lambda_{d_1,\ldots,d_k}\right)^2,
\]
where $\lambda_{d_1,\ldots,d_k}$ is defined in \cite[(2.1)]{Sono}; this is
the weight in \cite[(2.2)]{Sono}.  With
$\beta=\beta_{N,\eta}$, set
\begin{equation}\label{eq:S-def}
 S(N,\rho)=
 \sum_{\substack{N\leq n<2N\\n\equiv\nu_0\pmod W}}
 \left(\sum_{m=1}^k\beta(n+h_m)-\rho\right)w_n.
\end{equation}
Because $w_n\geq0$, positivity of \eqref{eq:S-def} implies that some
translate $n+\mathcal H$ contains at least $\rho+1$ $E_2$-numbers.

For $1\leq m\leq k$, write $\boldsymbol u_{\widehat m}$ for all coordinates
except $u_m$ and define
\[
 A_m(\boldsymbol u_{\widehat m})
 =\int_0^1F(\boldsymbol u)\,du_m,
 \quad
 J_k^{(m)}(F)=\|A_m\|_2^2.
\]
Here $\langle\cdot,\cdot\rangle$ and $\|\cdot\|_2$ refer to the inner product
and norm on $L^2([0,1]^{k-1},d\boldsymbol u_{\widehat m})$.  Extend $F$ by
zero outside $\mathcal R_k$.  Sono's affine transform is
\[
 F_{m,0}(\boldsymbol u;\xi)=F\left(u_1,\ldots,u_{m-1},
 \frac{2\xi}{\theta}+\frac{\theta-2\xi}{\theta}u_m,
 u_{m+1},\ldots,u_k\right),
 \quad 0<\xi<\frac{\theta}{2}.
\]
Put
\[
 B_{m,\xi}(\boldsymbol u_{\widehat m})
 =\int_0^1F_{m,0}(\boldsymbol u;\xi)\,du_m.
\]
The affine formula is exactly \cite[(3.42)]{Sono}; its mixed inner product is
\cite[(3.43)]{Sono}, and the corresponding square norm is
\cite[(5.20)]{Sono}.  Thus, in Sono's integrated notation,
\[
 L_{k,0}^{(m)}(F)
 =\int_\eta^{\frac{\theta}{2}}
 \frac{\frac{\theta}{2}-\xi}{1-\xi}
 \langle A_m,B_{m,\xi}\rangle\,\frac{d\xi}{\xi}
\]
as in \cite[Proposition 3.7]{Sono}, and
\[
 M_{k,0}^{(m)}(F)
 =\int_\eta^{\frac{\theta}{2}}
 \frac{(\frac{\theta}{2}-\xi)^2}{1-\xi}
 \|B_{m,\xi}\|_2^2\,\frac{d\xi}{\xi}.
\]
This is the integrated quantity in \cite[(5.21) and (5.22)]{Sono}.
The $E_2$ positivity expression in \cite[(6.3)]{Sono} is
\begin{align}\label{eq:sono-functional}
 \Ecal_{k,\eta}(F)
 ={}&-\theta\sum_{m=1}^kL_{k,0}^{(m)}(F)
 +\frac{\theta^2}{4}\log\frac{1-\eta}{\eta}
   \sum_{m=1}^kJ_k^{(m)}(F)\notag\\
 &+\sum_{m=1}^kM_{k,0}^{(m)}(F).
\end{align}

The part of Sono's sieve used in this paper may now be stated as follows.

\begin{proposition}[Sono's positivity criterion]\label{prop:sono-criterion}
Assume $\mathrm{BV}[\theta,\Pcal]$ and
$\mathrm{BV}[\theta,E_2]$, and fix
\[
 0<\eta<\min\left\{\frac14,\frac{\theta}{2}\right\}.
\]
Let $F$ be smooth and supported on $\mathcal R_k$.  If
\begin{equation}\label{eq:sono-positive}
 \Ecal_{k,\eta}(F)-\frac{\rho\theta}{2}I_k(F)>0,
\end{equation}
then, after choosing the auxiliary support loss $\delta>0$ sufficiently
small, one has $S(N,\rho)>0$ for all sufficiently large $N$.  Consequently,
\[
 H_\rho(E_2)\leq\operatorname{diam}(\mathcal H),
 \quad
 \operatorname{diam}(\mathcal H)=\max_{1\leq i<j\leq k}|h_i-h_j|.
\]
\end{proposition}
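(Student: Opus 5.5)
The plan is to obtain Proposition~\ref{prop:sono-criterion} directly from the asymptotic evaluations in Sono's paper, and to do no new sieve estimation. Write
\[
 S(N,\rho)=\sum_{m=1}^k S_\beta^{(m)}-\rho S_0,
\]
where $S_0$ is the sum of $w_n$ over $N\le n<2N$ with $n\equiv\nu_0\pmod W$, and $S_\beta^{(m)}$ is the same sum weighted by $\beta(n+h_m)$. Sono's treatment of $S_0$ is the standard Maynard computation, which uses only the support condition $R=N^{\theta/2-\delta}$. It gives
\[
 S_0=(1+o(1))\frac{\varphi(W)^k N(\log R)^k}{W^{k+1}}I_k(F).
\]
For $S_\beta^{(m)}$ we invoke Sono's main proposition, the one that produces \cite[(6.3)]{Sono} and assumes $\mathrm{BV}[\theta,\Pcal]$ and $\mathrm{BV}[\theta,E_2]$. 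It gives an asymptotic of the same shape, with $I_k(F)$ replaced by a linear combination of $J_k^{(m)}$, $L_{k,0}^{(m)}$ and $M_{k,0}^{(m)}$. These three quantities appear as the integrals in $\xi$ over $[\eta,\theta/2]$ displayed above; the lower limit $\eta$ comes from the condition $N^\eta<p_1$.

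The first step is bookkeeping of normalizations. With the support loss $\delta$, every factor $\theta/2$ in Sono's formulas is really $\theta/2-\delta$, coming from $\log R/\log N$. We therefore write
\[
 \sum_m S_\beta^{(m)}=(1+o(1))\frac{\varphi(W)^k N(\log R)^k}{W^{k+1}\log N}\cdot\frac{2}{\theta}\,\Ecal_{k,\eta}^{(\delta)}(F),
\]
and check against \cite[(6.3)]{Sono} that $\Ecal^{(\delta)}_{k,\eta}(F)$ is continuous in $\delta$ and equals $\Ecal_{k,\eta}(F)$ at $\delta=0$. The $\log N$ in the denominator matches the density of $\beta$, and after multiplying through, the main term of $\log N\cdot S(N,\rho)$ is a positive constant times
\[
 \Ecal_{k,\eta}^{(\delta)}(F)-\frac{\rho(\theta-2\delta)}{2}I_k(F).
\]
The exact positions of the factors of $2/\theta$ have to be read off from Sono; the statement \eqref{eq:sono-positive} is precisely the $\delta=0$ form of \cite[(6.3)]{Sono}.

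The second step is continuity. The transform $F_{m,0}$, the weights $(\theta/2-\xi)/(1-\xi)$, and the range of $\xi$ all depend continuously on $\theta/2-\delta$. Because $F$ is smooth and compactly supported, the integrands are bounded uniformly, so dominated convergence makes the left side of \eqref{eq:sono-positive} continuous as $\delta\to0^+$. Strict positivity at $\delta=0$ then persists for some small fixed $\delta>0$. With $k,\eta,\delta,F$ now fixed, the $o(1)$ errors vanish as $N\to\infty$, so $S(N,\rho)>0$ for all large $N$.

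The final step: since $w_n\ge0$, some $n\in[N,2N)$ has $\sum_m\beta(n+h_m)\ge\rho+1$. Thus $n+\mathcal H$ contains at least $\rho+1$ distinct $E_2$-numbers, and they lie in an interval of length $\operatorname{diam}(\mathcal H)$. Letting $N\to\infty$ gives infinitely many such $n$, hence $q_{j+\rho}-q_j\le\operatorname{diam}(\mathcal H)$ infinitely often.

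The main obstacle is the first step. One must verify that Sono's $\delta$-dependent constants match \eqref{eq:sono-functional} exactly, in particular the coefficient $\frac{\theta^2}{4}\log\frac{1-\eta}{\eta}$ and the sign of the $L$-term. One must also check that the $\eta$-dependence of the implied constants in \eqref{eq:BV-E2} is harmless because $\eta$ is fixed. I would first reconcile Sono's (6.3) with his Propositions 3.7 and 5.x term by term, keeping track of the $1/\log N$ normalization.
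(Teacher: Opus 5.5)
Your proposal is correct and follows the paper's proof. Like the paper, you specialize Sono's asymptotic (6.3), handle the support loss through $\theta'=\theta-2\delta$ and continuity of the leading coefficient at $\delta=0$, fix $k,\eta,F,\delta$ before letting $N\to\infty$, and conclude from $w_n\geq0$.

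One bookkeeping slip needs fixing when you reconcile with Sono. The sum $\sum_m S_\beta^{(m)}$ has the same order $\varphi(W)^kN(\log R)^k/W^{k+1}$ as $S_0$, because the divisor sum gains a factor $\log R$ that cancels the $1/\log N$ density of $\beta$, just as Maynard's prime sum carries $(\log R)^{k+1}/\log N$. So the main term that is a positive multiple of $\Ecal^{(\delta)}_{k,\eta}(F)-\frac{\rho(\theta-2\delta)}{2}I_k(F)$ is that of $S(N,\rho)$ itself, not of $\log N\cdot S(N,\rho)$. With your display as written, $\rho S_0$ would dominate and $S(N,\rho)$ would be negative.
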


\begin{proof}
This is the $E_2$ specialization of Sono's asymptotic evaluation
\cite[(6.3)]{Sono}, followed by the observation after
\cite[(2.3)]{Sono}.  In Sono's notation one first has
$\theta'=\theta-2\delta$; continuity of the leading coefficient at
$\delta=0$ preserves the strict inequality \eqref{eq:sono-positive} for all
sufficiently small fixed $\delta>0$.  The asymptotic is then taken with
$k,\eta,F,$ and $\delta$ fixed as $N\to\infty$.
\end{proof}

\begin{lemma}[Exact square completion]\label{lem:square}
For every $F$ for which the displayed integrals exist,
\begin{align}
 \Ecal_{k,\eta}(F)
 ={}&\sum_{m=1}^k\int_\eta^{\frac{\theta}{2}}
 \frac{(\frac{\theta}{2}-\xi)^2}{\xi(1-\xi)}
 \left\|B_{m,\xi}-\frac{\theta}{\theta-2\xi}A_m\right\|_2^2
 \,d\xi\notag\\
 &+\frac{\theta^2}{4}\log\frac{2-\theta}{\theta}
 \sum_{m=1}^kJ_k^{(m)}(F).\label{eq:square}
\end{align}
In particular, the right side is nonnegative when $0<\theta\leq1$.
\end{lemma}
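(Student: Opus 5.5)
The plan is to expand the square on the right side of \eqref{eq:square} pointwise in $\xi$ and match the three resulting terms against the three sums in \eqref{eq:sono-functional}. The key observation is that $A_m$ does not depend on $\xi$, so $\|A_m\|_2^2=J_k^{(m)}(F)$ can be pulled outside the $\xi$-integral.

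Fix $m$ and write $w(\xi)=\frac{(\frac{\theta}{2}-\xi)^2}{\xi(1-\xi)}$. Since $\theta-2\xi=2(\frac{\theta}{2}-\xi)$, bilinearity of the inner product gives
\[
 w(\xi)\left\|B_{m,\xi}-\tfrac{\theta}{\theta-2\xi}A_m\right\|_2^2
 =w(\xi)\|B_{m,\xi}\|_2^2
 -\frac{\theta(\frac{\theta}{2}-\xi)}{\xi(1-\xi)}\langle A_m,B_{m,\xi}\rangle
 +\frac{\theta^2}{4}\,\frac{\|A_m\|_2^2}{\xi(1-\xi)} .
\]
This rests on two cancellations: $(\frac{\theta}{2}-\xi)^2\cdot\frac{2\theta}{2(\frac{\theta}{2}-\xi)}=\theta(\frac{\theta}{2}-\xi)$ and $(\frac{\theta}{2}-\xi)^2\cdot\frac{\theta^2}{4(\frac{\theta}{2}-\xi)^2}=\frac{\theta^2}{4}$. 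These cancellations also show that the apparent singularity of $\frac{\theta}{\theta-2\xi}$ at $\xi=\theta/2$ is harmless once the weight is included. The integrand on the right is therefore integrable on $[\eta,\frac{\theta}{2}]$ whenever the integrals in the hypothesis exist, since $\eta>0$ keeps $\xi$ away from $0$ and $1$.

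Next I integrate over $\xi\in[\eta,\frac{\theta}{2}]$ and sum over $m$, treating each term separately. The first term gives $\sum_m M_{k,0}^{(m)}(F)$ by the definition of $M$. The second term gives $-\theta\sum_m L_{k,0}^{(m)}(F)$ by the definition of $L$. For the third term I use $\int\frac{d\xi}{\xi(1-\xi)}=\log\frac{\xi}{1-\xi}$, which yields
\[
 \frac{\theta^2}{4}\left(\log\frac{1-\eta}{\eta}-\log\frac{2-\theta}{\theta}\right)\sum_m J_k^{(m)}(F).
\]
Comparing with \eqref{eq:sono-functional}, the sum of the three integrated terms is therefore $\Ecal_{k,\eta}(F)-\frac{\theta^2}{4}\log\frac{2-\theta}{\theta}\sum_mJ_k^{(m)}(F)$, and rearranging gives \eqref{eq:square}. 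The $\eta$-dependence cancels exactly, which is the point of the identity.

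For the final claim, the first sum is an integral of nonnegative weights times squared norms. It is nonnegative because $\eta<\frac{\theta}{2}<1$, so $\xi(1-\xi)>0$ on the range of integration. Each $J_k^{(m)}(F)$ is a squared norm, and $\log\frac{2-\theta}{\theta}\geq0$ exactly when $\theta\leq1$.

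There is no real obstacle here: the proof is a bookkeeping identity. The only point needing care is the legitimacy of splitting the integral into three pieces, which requires that each piece converges separately. This follows from the hypothesis that the displayed integrals exist, together with the cancellation at $\xi=\theta/2$ noted above.
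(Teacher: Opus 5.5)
Your proposal is correct and follows the paper's proof. You expand the square pointwise in $\xi$, identify the three pieces with $M_{k,0}^{(m)}$, $-\theta L_{k,0}^{(m)}$ and $\frac{\theta^2}{4}J_k^{(m)}$, and use $\int_\eta^{\theta/2}\frac{d\xi}{\xi(1-\xi)}=\log\frac{1-\eta}{\eta}-\log\frac{2-\theta}{\theta}$ to replace the $\log\frac{1-\eta}{\eta}$ coefficient by $\log\frac{2-\theta}{\theta}$; the paper does the same, merely keeping the factor $1/\xi$ in the measure $d\xi/\xi$ rather than in the weight.
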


\begin{proof}
For fixed $m$ and $\xi$, expansion of the square gives
\begin{align*}
 &\frac{(\frac{\theta}{2}-\xi)^2}{1-\xi}
 \left\|B_{m,\xi}-\frac{\theta}{\theta-2\xi}A_m\right\|_2^2\\
 &\quad=
 \frac{(\frac{\theta}{2}-\xi)^2}{1-\xi}\|B_{m,\xi}\|_2^2
 -\theta\frac{\frac{\theta}{2}-\xi}{1-\xi}
 \langle A_m,B_{m,\xi}\rangle
 +\frac{\theta^2}{4(1-\xi)}\|A_m\|_2^2.
\end{align*}
It remains to use
\[
 \int_\eta^{\frac{\theta}{2}}\frac{d\xi}{\xi(1-\xi)}
 =\log\frac{1-\eta}{\eta}+\log\frac{\theta}{2-\theta}.
\]
Moving the second logarithm to the other side proves
\eqref{eq:square}.
\end{proof}

\section{The general product ansatz}

Fix $f\in C^2[0,1]$ with
\[
 f(0)=0,\quad f(1)=1,\quad r=f'(0)\neq0.
\]
For $A>1$, set
\[
 T_A=\frac{e^A-1}{A},\quad
 y_A(s)=\frac{\log(1+As)}{A},\quad 0\leq s\leq T_A,
\]
and define
\[
 H_A(s)=f(y_A(s)),\quad
 g_A(s)=H_A'(s)=\frac{f'(y_A(s))}{1+As}.
\]
Extend $H_A$ by $H_A(s)=1$ for $s\geq T_A$ and $g_A$ by zero.
Then
\[
 \int_0^{T_A}g_A(s)\,ds=1.
\]
Let
\[
 E=\int_0^1f'(y)^2\,dy,\quad
 S=\int_0^1f(y)^2\,dy,\quad
 \gamma_A=\int_0^{T_A}g_A(s)^2\,ds.
\]

\begin{lemma}[One-dimensional asymptotics]\label{lem:one-d-asymptotics}
As $A\to\infty$,
\begin{align}
 \gamma_A&=\int_0^1f'(y)^2e^{-Ay}\,dy
 =\frac{r^2}{A}+O_f\left(\frac{1}{A^2}\right),\label{eq:gamma}\\
 Q_A&:=\int_0^{T_A}\frac{H_A(s)^2}{s}\,ds
 =A\int_0^1\frac{f(y)^2}{1-e^{-Ay}}\,dy
 =AS+O_f(1).\label{eq:Q}
\end{align}
If $X_A$ has density $\frac{g_A(s)^2}{\gamma_A}$ on $[0,T_A]$, then the exact identity
\begin{equation}\label{eq:mean}
 \E X_A=\frac{E-\gamma_A}{A\gamma_A}
\end{equation}
holds.  Consequently $\E X_A\to \frac{E}{r^2}$.
\end{lemma}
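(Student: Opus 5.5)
The plan is to substitute $s=(e^{Ay}-1)/A$, equivalently $y=y_A(s)$, in each integral. For this change of variables $ds=e^{Ay}\,dy$ and $1+As=e^{Ay}$, and $s$ runs over $[0,T_A]$ exactly when $y$ runs over $[0,1]$.

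\emph{The quantity $\gamma_A$.} Since $g_A(s)=f'(y)e^{-Ay}$, we get $g_A^2\,ds=f'(y)^2e^{-Ay}\,dy$. This gives the first equality in \eqref{eq:gamma}. For the asymptotics I would Taylor expand $f'(y)^2=r^2+O_f(y)$, using $f\in C^2$. Then $\int_0^1 r^2e^{-Ay}\,dy=r^2(1-e^{-A})/A$, and $\int_0^1 y e^{-Ay}\,dy\le A^{-2}$. Together these give $r^2/A+O_f(A^{-2})$.

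\emph{The quantity $Q_A$.} Here $H_A(s)^2/s\,ds=f(y)^2e^{Ay}\,dy/\bigl((e^{Ay}-1)/A\bigr)=Af(y)^2\,dy/(1-e^{-Ay})$. This is the second expression in \eqref{eq:Q}. The integrand is bounded near $0$ because $f(y)^2=O(y^2)$ while $1-e^{-Ay}\asymp\min(1,Ay)$.

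The difference from $AS$ is
\[
A\int_0^1 f(y)^2\frac{e^{-Ay}}{1-e^{-Ay}}\,dy .
\]
I would split this at $y=1/A$. On $[0,1/A]$, use $f(y)^2\ll y^2$ and $1-e^{-Ay}\gg Ay$; the integrand is then $\ll y/A$, so this piece contributes $\ll A\cdot A^{-3}$. On $[1/A,1]$, the denominator is bounded below, and $A\int y^2e^{-Ay}\,dy\ll A^{-2}$. So the error is in fact $O_f(A^{-1})$, which is more than the $O_f(1)$ claimed. This is the step needing the most care, since the kernel $e^{-Ay}/(1-e^{-Ay})$ is singular at $y=0$, but the vanishing $f(0)=0$ handles it.

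\emph{The mean \eqref{eq:mean}.} Compute $\int_0^{T_A} s\,g_A^2\,ds=\int_0^1\frac{e^{Ay}-1}{A}f'(y)^2e^{-Ay}\,dy=\frac{1}{A}\int_0^1 f'(y)^2(1-e^{-Ay})\,dy=\frac{E-\gamma_A}{A}$. Dividing by $\gamma_A$ gives \eqref{eq:mean} exactly.

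\emph{The limit.} By \eqref{eq:gamma}, $A\gamma_A=r^2+O_f(A^{-1})\to r^2\neq0$ and $\gamma_A\to0$. Hence $\E X_A\to E/r^2$.
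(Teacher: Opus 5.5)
Your proposal is correct and follows essentially the same route as the paper: you use the substitution $1+As=e^{Ay}$ in all three integrals, an elementary bound on $\int_0^1(f'^2-r^2)e^{-Ay}\,dy$ for $\gamma_A$, and the vanishing $f(y)=O_f(y)$ at zero to control the kernel $e^{-Ay}/(1-e^{-Ay})$ in $Q_A$. The only blemish is cosmetic: both of your pieces are $O_f(A^{-2})$, so your argument actually gives $Q_A=AS+O_f(A^{-2})$, as the paper remarks, rather than the $O_f(A^{-1})$ you state.
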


\begin{proof}
The substitution $1+As=e^{Ay}$ gives \eqref{eq:gamma}.  Watson's
endpoint estimate, or one integration by parts after subtracting $r^2$,
gives its second equality.  The same substitution gives the exact integral
in \eqref{eq:Q}.  Since $f(y)=O_f(y)$ at zero,
\[
 \int_0^1 f(y)^2\frac{e^{-Ay}}{1-e^{-Ay}}\,dy=O_f\left(\frac{1}{A^3}\right),
\]
so in fact the error in \eqref{eq:Q} is $O_f(A^{-2})$; the weaker displayed
bound is sufficient.  Finally,
\begin{align*}
 A\int_0^{T_A}s g_A(s)^2\,ds
 &=\int_0^1f'(y)^2(1-e^{-Ay})\,dy\\
 &=E-\gamma_A,
\end{align*}
which proves \eqref{eq:mean}.
\end{proof}

For $k\geq2$ define the sharply truncated product
\begin{equation}\label{eq:F-product}
 F_{k,A}(\boldsymbol u)=
 \prod_{i=1}^k g_A(ku_i)\one_{\mathcal R_k}(\boldsymbol u).
\end{equation}

\begin{lemma}[Simplex concentration]\label{lem:concentration}
Suppose $E<r^2$ and $A=c\log k$ with fixed $0<c\leq1$.  Then
\[
 I_k(F_{k,A})=(1+o_f(1))\frac{\gamma_A^k}{k^k}.
\]
For every $m$, the same estimate with $k-1$ factors holds on
\[
 \mathcal G_m=\left\{\boldsymbol u_{\widehat m}:
 \sum_{i\neq m}u_i\leq1-\frac{T_A}{k}\right\}.
\]
The error is uniform in $m$.
\end{lemma}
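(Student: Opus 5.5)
Substituting $v_i=ku_i$ and using that $g_A$ vanishes beyond $T_A$, we get the exact identity
\[
 I_k(F_{k,A})=\frac{1}{k^k}\int_{[0,T_A]^k}\prod_{i=1}^k g_A(v_i)^2\,
 \one\Bigl(\sum_i v_i\leq k\Bigr)\,d\boldsymbol v
 =\frac{\gamma_A^k}{k^k}\,\mathbb P\Bigl(\sum_{i=1}^kX_i\leq k\Bigr),
\]
where $X_1,\dots,X_k$ are independent copies of the variable $X_A$ of Lemma~\ref{lem:one-d-asymptotics}. It therefore suffices to show that $\mathbb P(\sum X_i>k)\to0$ as $k\to\infty$, uniformly in the indicated parameters. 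Note that $A=c\log k\to\infty$ with $k$.

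The main step is a Chebyshev bound. By \eqref{eq:mean}, $\mu_A:=\E X_A\to E/r^2<1$. Hence for large $k$ we have $\mu_A\leq 1-\kappa$ with $\kappa=(1-E/r^2)/2>0$. Chebyshev then gives
\[
 \mathbb P\Bigl(\sum X_i>k\Bigr)\leq\frac{k\operatorname{Var}X_A}{\kappa^2k^2}\leq\frac{\E X_A^2}{\kappa^2 k}.
\]
For the second moment, $X_A\leq T_A$ trivially gives $\E X_A^2\leq T_A\mu_A$. This is too weak, since $T_A\approx k^c/(c\log k)$ can be as large as $k/\log k$ when $c=1$. So we compute directly:
\[
 A^2\int_0^{T_A} s^2g_A^2\,ds=\int_0^1 f'(y)^2(1-e^{-Ay})^2e^{Ay}\,dy\ll_f e^{A}/A.
\]
Dividing by $\gamma_A\asymp r^2/A$ yields $\E X_A^2\ll_f e^A/A^2=k^c/(c\log k)^2$. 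For $c\leq1$ the Chebyshev bound is then $O_f\bigl(k^{c-1}/(\log k)^2\bigr)=o(1)$. This is where the hypothesis $c\leq1$ enters, and it is the only delicate point: the estimate must hold even at $c=1$, where the tail of $X_A$ reaches scale $k$. The computation above shows the logarithmic saving suffices. For $c<1$ one could alternatively truncate, but the second-moment route covers all $c$ uniformly.

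For the slice statement, the integrand of $J_k^{(m)}$, restricted to $\boldsymbol u_{\widehat m}\in\mathcal G_m$, gives the same expression with $k-1$ factors. The point of $\mathcal G_m$ is that there the constraint $u_m\leq 1-\sum_{i\neq m}u_i$ is automatically implied by $ku_m\leq T_A$. So the truncation in the $m$th variable does not interact with the others, and the $k-1$ remaining variables satisfy $\sum_{i\neq m}v_i\leq k-T_A$. The needed probability is $\mathbb P(\sum_{i\leq k-1}X_i\leq k-T_A)$. Since $T_A=o(k)$ (because $T_A\ll k/\log k$), the same Chebyshev argument with slack $\kappa k-T_A\geq\kappa k/2$ shows this tends to $1$. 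By symmetry the bound does not depend on $m$, which gives the claimed uniformity.
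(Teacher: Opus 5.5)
Your proof is correct and follows the paper's route: you use the exact identity $I_k(F_{k,A})=\gamma_A^kk^{-k}\,\mathbb P(X_1+\cdots+X_k\leq k)$, then $\E X_A\to E/r^2<1$ and Chebyshev, and then the same argument with $k-1$ variables and threshold $k-T_A$, using $T_A=o(k)$. One correction: the trivial bound $\E X_A^2\leq T_A\E X_A$ is not too weak, since it already gives a Chebyshev bound $O_f(T_A/k)=O_f(1/\log k)=o(1)$ even at $c=1$, and this is exactly what the paper uses; so your exact second-moment computation, which is correct and saves a further factor $A$, is a refinement rather than a necessity, and $c=1$ is not actually a delicate point.
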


\begin{proof}
Let $X_{A,1},\ldots,X_{A,k}$ be independent copies of $X_A$.  Under their
product probability measure the condition defining $\mathcal R_k$ is
$X_{A,1}+\cdots+X_{A,k}\leq k$.  By
Lemma~\ref{lem:one-d-asymptotics}, there is a constant $\delta_f>0$ such
that the mean of this sum is at most $(1-\delta_f)k$ for
all sufficiently large $k$.  Moreover
\[
 \operatorname{Var}(X_A)\leq \E X_A^2\leq T_A\E X_A\ll_f T_A.
\]
Chebyshev's inequality therefore bounds the discarded mass by
$O_f(\frac{T_A}{k})=o(1)$, because $\frac{T_A}{k}\ll\frac{1}{\log k}$ when $c=1$ and is smaller
when $c<1$.  Replacing $k$ by $k-1$ and the threshold by $k-T_A$ leaves a
positive linear gap, since $T_A=o(k)$.  This proves both assertions.
\end{proof}

On $\mathcal G_m$ the simplex cutoff does not constrain $u_m$.  With
$s=\frac{2k\xi}{\theta}$, a direct affine change of variable gives
\begin{equation}\label{eq:good-region-identity}
 B_{m,\xi}-\frac{\theta}{\theta-2\xi}A_m
 =-\frac{\theta}{\theta-2\xi}\frac{H_A(s)}{k}
 \prod_{i\neq m}g_A(ku_i).
\end{equation}
This identity remains valid for $s>T_A$ because $H_A(s)=1$ and the
transformed $g_A$-integral then vanishes.

\begin{lemma}[Smooth approximation of the Sono functionals]
\label{lem:smoothing}
Fix $k$, $\eta>0$, and $0<\theta\leq1$.  If
$F_j\to F$ in $L^2(\mathcal R_k)$ and the functions are extended by zero to
$[0,1]^k$, then
\[
 I_k(F_j)\to I_k(F),\quad
 J_k^{(m)}(F_j)\to J_k^{(m)}(F),
\]
and likewise
\[
 L_{k,0}^{(m)}(F_j)\to L_{k,0}^{(m)}(F),\quad
 M_{k,0}^{(m)}(F_j)\to M_{k,0}^{(m)}(F).
\]
Consequently every bounded $L^2$ function supported on $\mathcal R_k$ may,
for the purpose of a strict Sono positivity inequality, be replaced by a
function in $C_c^\infty(\mathcal R_k^\circ)$.
\end{lemma}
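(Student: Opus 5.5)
The proof reduces to showing that each of the four functionals is continuous on $L^2(\mathcal R_k)$. Since all are quadratic, it is enough to show that each comes from a bounded bilinear form on $L^2([0,1]^k)$ with operator norm depending only on $k,\eta,\theta$. Continuity of $I_k$ is immediate. For $J_k^{(m)}$, the map $F\mapsto A_m$ is bounded from $L^2([0,1]^k)$ to $L^2([0,1]^{k-1})$ with norm at most $1$, by Cauchy--Schwarz in $u_m$. Then
\[
 |J_k^{(m)}(F_j)-J_k^{(m)}(F)|\leq\|A_m^{(j)}-A_m\|_2\,\bigl(\|A_m^{(j)}\|_2+\|A_m\|_2\bigr)\to0.
\]

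For $L$ and $M$, I first bound $F\mapsto B_{m,\xi}$ uniformly in $\xi\in[\eta,\frac\theta2]$. Substitute $v=\frac{2\xi}{\theta}+\frac{\theta-2\xi}{\theta}u_m$, so that $du_m=\frac{\theta}{\theta-2\xi}\,dv$. This gives
\[
 B_{m,\xi}=\frac{\theta}{\theta-2\xi}\int_{2\xi/\theta}^1F\,dv.
\]
Cauchy--Schwarz then yields $\|B_{m,\xi}\|_2\leq\frac{\theta}{\theta-2\xi}\|F\|_{L^2}$. The factor $\frac{\theta}{\theta-2\xi}$ blows up as $\xi\to\frac\theta2$, but the weights in $L$ and $M$ vanish to matching order:
\[
 \frac{\frac\theta2-\xi}{1-\xi}\cdot\frac{\theta}{\theta-2\xi}=\frac{\theta}{2(1-\xi)},\qquad
 \frac{(\frac\theta2-\xi)^2}{1-\xi}\cdot\frac{\theta^2}{(\theta-2\xi)^2}=\frac{\theta^2}{4(1-\xi)}.
\]
Both are bounded on $[\eta,\frac\theta2]$, since $\theta\le1$ keeps $1-\xi\geq\frac12$. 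The lower cutoff $\eta>0$ makes $d\xi/\xi$ integrable. Hence
\[
 |L_{k,0}^{(m)}(F)|\ll_{\eta,\theta}\|F\|_2^2,\qquad |M_{k,0}^{(m)}(F)|\ll_{\eta,\theta}\|F\|_2^2,
\]
and the corresponding bilinear forms are bounded. The polarization estimate used for $J$ then gives convergence of $L$ and $M$ along $F_j\to F$. Measurability in $\xi$ is routine by Fubini.

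The key step is this cancellation at $\xi=\frac\theta2$, so that the affine rescaling does not produce an unbounded operator. It is also the only place where the hypotheses $\eta>0$ and $\theta\le1$ enter. This step is exactly why the integrated forms were written with the weights $(\frac\theta2-\xi)$ and $(\frac\theta2-\xi)^2$.

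For the consequence, take a bounded $F$ supported on $\mathcal R_k$. First multiply $F$ by the indicator of a slightly shrunk simplex $(1-\epsilon)\mathcal R_k$ with the boundary faces $u_i=0$ pushed inward. Then mollify with a kernel of radius smaller than the remaining margin. This produces $F_j\in C_c^\infty(\mathcal R_k^\circ)$ with $F_j\to F$ in $L^2$, by dominated convergence for the truncation and standard mollifier convergence for the smoothing. By the continuity just proved, $\Ecal_{k,\eta}(F_j)-\frac{\rho\theta}{2}I_k(F_j)$ converges to the same expression for $F$. A strict positivity inequality for $F$ is therefore inherited by some $F_j$, to which Proposition~\ref{prop:sono-criterion} applies.
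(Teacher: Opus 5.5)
Your proposal is correct and follows essentially the same route as the paper. You change variables in the transformed fiber, bound it by Cauchy--Schwarz, and let the weights vanishing at $\xi=\frac{\theta}{2}$ absorb the blow-up. You then use $\eta>0$ at the lower endpoint and conclude by polarization and density of $C_c^\infty(\mathcal R_k^\circ)$.

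The only difference is that you use the cruder bound $\|B_{m,\xi}\|_2\leq\frac{\theta}{\theta-2\xi}\|F\|_2$, whereas the paper keeps the interval length in Cauchy--Schwarz and gets $\sqrt{\theta/(\theta-2\xi)}$. Your bound still suffices, giving bounded rather than vanishing majorants.
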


\begin{proof}
The assertions for $I_k$ and $J_k^{(m)}$ follow from Cauchy--Schwarz and
the fact that fiber integration from $L^2([0,1]^k)$ to
$L^2([0,1]^{k-1})$ has norm at most one.  For the transformed fiber put
\[
 b_\xi=\frac{\theta-2\xi}{\theta}.
\]
After changing variables in the $m$th coordinate,
\[
 B_{m,\xi}(F)=\frac{1}{b_\xi}
 \int_{\frac{2\xi}{\theta}}^1
 F(\boldsymbol u_{\widehat m},v)\,dv.
\]
Cauchy--Schwarz therefore gives
\begin{equation}\label{eq:B-operator}
 \|B_{m,\xi}(F)\|_2
 \leq b_\xi^{-\frac12}\|F\|_2.
\end{equation}
Near $\xi=\frac{\theta}{2}$, the weight in $L$ is $O(b_\xi)$ and the
weight in $M$ is $O(b_\xi^2)$.  Thus \eqref{eq:B-operator} supplies the
integrable majorants $O(b_\xi^{\frac12})$ and $O(b_\xi)$, respectively.
The lower endpoint causes no difficulty because $\eta>0$.  Dominated
convergence, polarization for $L$, and the identity
$|x^2-y^2|\leq|x-y|(|x|+|y|)$ for $M$ prove the claimed continuity.
Finally, $C_c^\infty(\mathcal R_k^\circ)$ is dense in
$L^2(\mathcal R_k)$.
\end{proof}

\begin{proposition}[Product-weight transfer]\label{prop:transfer}
Let $f$ satisfy the hypotheses above and $E<r^2$.  Put $A=c\log k$ with
$0<c\leq1$ and choose $\eta=\frac{\theta}{2k^2}$.  Then the smooth approximants to
\eqref{eq:F-product} allowed in Sono's sieve may be chosen so that
\begin{equation}\label{eq:transfer}
 \frac{\Ecal_{k,\eta}(F)}{(\frac{\theta}{2})I_k(F)}
 \geq
 \frac{\theta A}{2r^2}
 \left(\log k-A(1-S)+O_f(\log A)+o_f(\log k)\right).
\end{equation}
The $o_f(\log k)$ term is taken with $f$ fixed.
\end{proposition}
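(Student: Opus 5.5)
The plan is to start from the exact square completion, Lemma~\ref{lem:square}, and use positivity twice. Since $0<\theta\leq1$, the $J$-term on the right of \eqref{eq:square} is nonnegative and I drop it. For each $m$ the integrand $\|B_{m,\xi}-\frac{\theta}{\theta-2\xi}A_m\|_2^2$ is an integral of a nonnegative function over $\boldsymbol u_{\widehat m}$. I therefore restrict that integral to the good region $\mathcal G_m$ of Lemma~\ref{lem:concentration}, which only lowers the value. All of this is first done for the sharply truncated $F_{k,A}$ of \eqref{eq:F-product}, which is bounded and supported on $\mathcal R_k$.

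On $\mathcal G_m$ I insert the identity \eqref{eq:good-region-identity}. The $\xi$-weight then becomes
\[
 \frac{(\frac{\theta}{2}-\xi)^2}{\xi(1-\xi)}\cdot\frac{\theta^2}{(\theta-2\xi)^2}
 =\frac{\theta^2}{4\xi(1-\xi)}\geq\frac{\theta^2}{4\xi}.
\]
The $\boldsymbol u_{\widehat m}$-integral factors as $k^{-2}H_A(s)^2$ times $\int_{\mathcal G_m}\prod_{i\neq m}g_A(ku_i)^2$. By Lemma~\ref{lem:concentration} the latter equals $(1+o_f(1))\gamma_A^{k-1}/k^{k-1}$, uniformly in $m$. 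I then substitute $s=2k\xi/\theta$, so that $d\xi/\xi=ds/s$. With $\eta=\frac{\theta}{2k^2}$ the range $\eta\leq\xi\leq\frac{\theta}{2}$ becomes $1/k\leq s\leq k$. Summing over the $k$ values of $m$ gives
\[
 \Ecal_{k,\eta}(F_{k,A})\geq(1+o_f(1))\frac{\theta^2}{4k}\frac{\gamma_A^{k-1}}{k^{k-1}}\int_{1/k}^{k}\frac{H_A(s)^2}{s}\,ds .
\]

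For the one-dimensional integral I split at $T_A$. Since $H_A=1$ beyond $T_A$ and $T_A\leq k$ (because $A\leq\log k$), the range $[T_A,k]$ contributes exactly $\log k-\log T_A$. Since $\log T_A=A-\log A+O(1)$, this is $\log k-A+O(\log A)$. The range $[0,T_A]$ gives $Q_A=AS+O_f(1)$ by \eqref{eq:Q}. The removed piece $\int_0^{1/k}H_A^2/s\,ds$ is $O_f(k^{-2})$, since $H_A(s)=O_f(s)$ near $0$ (because $f(y)=O(y)$ and $y_A(s)\leq s$). So the integral equals $\log k-A(1-S)+O_f(\log A)$.

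Dividing by $\frac{\theta}{2}I_k(F_{k,A})=(1+o_f(1))\frac{\theta}{2}\gamma_A^k/k^k$, again by Lemma~\ref{lem:concentration}, the powers of $k$ cancel and the prefactor is $\frac{\theta}{2\gamma_A}$. By \eqref{eq:gamma} this is $\frac{\theta A}{2r^2}(1+O_f(1/A))$. The relative errors $O(1/A)$ and $o_f(1)$, multiplied by a bracket of size $O(\log k)$, are absorbed into $o_f(\log k)$ because $A=c\log k\to\infty$. This gives \eqref{eq:transfer} for $F_{k,A}$.

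Finally I pass to smooth approximants. For fixed $k$, Lemma~\ref{lem:smoothing} gives $C_c^\infty(\mathcal R_k^\circ)$ functions $F_j\to F_{k,A}$ in $L^2$ along which $\Ecal_{k,\eta}$ and $I_k$ both converge. Hence the ratio converges, and it can be made to exceed the right side of \eqref{eq:transfer} minus any fixed slack, which is absorbed into the $o_f(\log k)$ term.

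I expect the main obstacle to be the bookkeeping on the bad region $\mathcal G_m^c$ and the uniformity of the concentration estimate in $m$. The plan sidesteps the bad region entirely by discarding it through positivity, so the only thing to check carefully is that \eqref{eq:good-region-identity} indeed holds throughout $\mathcal G_m$, including for $s>T_A$.
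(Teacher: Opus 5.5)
Your proposal is correct and follows the paper's proof step for step: the square completion, restriction to $\mathcal G_m$, the identity \eqref{eq:good-region-identity}, the concentration lemma for both numerator and denominator, the substitution $s=\frac{2k\xi}{\theta}$ with the split at $T_A$, and finally Lemma~\ref{lem:smoothing}. The only difference is that you discard the nonnegative $J$-term and bound the factor $(1-\xi)^{-1}$ below by $1$, whereas the paper keeps both to obtain $\log\frac{2k}{\theta T_A}$ on $[T_A,k]$; your version loses only the constant $\log\frac{2}{\theta}$, which is absorbed into $O_f(\log A)$.
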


\begin{proof}
Insert \eqref{eq:good-region-identity} into
Lemma~\ref{lem:square}, restrict the nonnegative square to
$\mathcal G_m$, and use Lemma~\ref{lem:concentration}.  The change of
variables $s=\frac{2k\xi}{\theta}$ gives, after summing over $m$,
\begin{align*}
 \frac{\Ecal_{k,\eta}(F)}{I_k(F)}
 \geq(1-o_f(1))\frac{\theta^2}{4\gamma_A}
 \left\{
 \int_{\frac{1}{k}}^{k}\frac{H_A(s)^2}{s(1-\frac{\theta s}{2k})}\,ds
 +\log\frac{2-\theta}{\theta}
 \right\}.
\end{align*}
Split the integral at $T_A$.  On $[T_A,k]$, $H_A=1$; exact integration
and combination with the last logarithm gives
\[
 \log\frac{2k}{\theta T_A}+O\left(\frac{T_A}{k}\right).
\]
On $[\frac{1}{k},T_A]$, replacing $(1-\frac{\theta s}{2k})^{-1}$ by $1$ costs
$O_f(1)$ after normalization, and replacing the lower endpoint by zero costs
$O_f(k^{-2})$.  Lemma~\ref{lem:one-d-asymptotics} now yields
\[
 \frac{\Ecal_{k,\eta}(F)}{I_k(F)}
 \geq
 \frac{\theta^2A}{4r^2}
 \left(\log k-A+AS+O_f(\log A)+o_f(\log k)\right),
\]
which is \eqref{eq:transfer}.

Lemma~\ref{lem:smoothing} now replaces the two sharp cutoffs by smooth
functions supported strictly inside $\mathcal R_k$ while preserving the
strict lower bound to any prescribed accuracy.  No uniformity in $k$ is
needed: the approximant is fixed before $N\to\infty$.  With this order of
choices, the remainders in \cite[(3.45) and (5.22)]{Sono} tend to zero even
though their constants may depend on the derivative norms of the fixed
approximant.
\end{proof}

\section{Optimization and the one-dimensional variational problem}

\subsection{Optimization in the scale parameter}

Dividing \eqref{eq:transfer} out and taking $A=c\log k$ gives
\begin{equation}\label{eq:ratio-main}
 \frac{\Ecal_{k,\eta}(F)}{(\frac{\theta}{2})I_k(F)}
 \geq
 \left(\frac{\theta}{2r^2}c\{1-c(1-S)\}+o_f(1)\right)(\log k)^2.
\end{equation}
The elementary optimization is
\begin{equation}\label{eq:c-opt}
 \max_{0<c\leq1}c\{1-c(1-S)\}
 =
 \begin{cases}
 \displaystyle\frac{1}{4(1-S)},&S\leq\frac12,\\[6pt]
 S,&S\geq\frac12.
 \end{cases}
\end{equation}
At $S=\frac12$ both formulas agree.

\subsection{The sharp variational constant}

We require the following elementary variational statement.

\begin{lemma}[Sharp variational inequality]\label{lem:variational}
Let $f\in H^1(0,1)$, $f(0)=0$, $f(1)=1$, and
$S=\int_0^1f^2$.  If $S\leq\frac12$, then
\begin{equation}\label{eq:sharp-var}
 \left(\int_0^1f'(x)^2\,dx\right)(1-S)
 \geq\frac{\pi^2}{16}.
\end{equation}
The constant is sharp.  If $S\geq\frac12$, then
\begin{equation}\label{eq:DN}
 \int_0^1f'(x)^2\,dx\geq\frac{\pi^2}{4}S.
\end{equation}
\end{lemma}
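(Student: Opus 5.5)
The plan is to obtain both inequalities from one family of Lagrangian lower bounds, indexed by a frequency $\omega\in(0,\frac{\pi}{2})$ and built from the extremals $\sin(\omega x)/\sin\omega$. Sharpness should then come from the endpoint case $\omega\to\frac{\pi}{2}$.

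\emph{Step 1 (the case $S\geq\frac12$).} The inequality \eqref{eq:DN} is the mixed Dirichlet--Neumann Wirtinger inequality. The first eigenvalue of $-u''$ on $(0,1)$ with $u(0)=0$ and $u'(1)=0$ is $\frac{\pi^2}{4}$, so $\int_0^1f'^2\geq\frac{\pi^2}{4}\int_0^1f^2$ for every $f\in H^1$ with $f(0)=0$. This holds for all $S$, and the hypothesis $S\geq\frac12$ is only where it is the relevant bound. To prove it, expand $f$ in the orthonormal basis $\sqrt2\sin((n+\frac12)\pi x)$ and apply Parseval.

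\emph{Step 2 (a family of lower bounds).} Fix $\omega\in(0,\frac{\pi}{2})$ and put $\phi(x)=\sin(\omega x)/\sin\omega$, so that $\phi''+\omega^2\phi=0$, $\phi(0)=0$ and $\phi(1)=1$. For admissible $f$, write $f=\phi+h$ with $h(0)=h(1)=0$. Integrating by parts, the cross terms in $\int f'^2-\omega^2\int f^2$ vanish. The $h$-part is $\int h'^2-\omega^2\int h^2$, which is nonnegative because $\omega^2<\frac{\pi^2}{4}\leq\pi^2$. The $\phi$-part equals $[\phi\phi']_0^1=\omega\cot\omega$. Hence
\[
 E\geq\omega^2S+\omega\cot\omega\qquad(0<\omega<\tfrac{\pi}{2}).
\]

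\emph{Step 3 (choosing $\omega$).} For $S\leq\frac13$ no choice is needed. Cauchy--Schwarz gives $E\geq(\int_0^1 f')^2=1$, so $E(1-S)\geq\frac23>\frac{\pi^2}{16}$.

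For $\frac13\leq S\leq\frac12$, let
\[
 \sigma(\omega)=\int_0^1\phi^2=\frac{1}{\sin^2\omega}\left(\frac12-\frac{\sin2\omega}{4\omega}\right),
\]
which runs from $\frac13$ (as $\omega\to0$) to $\frac12$ (at $\omega=\frac{\pi}{2}$). Choose $\omega=\omega(S)$ with $\sigma(\omega)=S$. I expect to check that $\sigma$ is smooth and strictly increasing, so that $\omega(S)$ is differentiable; this is the one routine but slightly fiddly verification. Set $G(S)=(\omega^2S+\omega\cot\omega)(1-S)$, which is a lower bound for $E(1-S)$ by Step 2.

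The key computation is that $G$ is nonincreasing. Because $\omega$ optimizes the bound at $S=\sigma(\omega)$, the envelope identity gives $\frac{d}{dS}(\omega^2S+\omega\cot\omega)=\omega^2$. Therefore
\[
 G'(S)=\omega^2(1-2S)-\omega\cot\omega .
\]
Substituting $S=\sigma(\omega)$ gives $\omega(1-2S)=\cot\omega-\omega\cot^2\omega$. Hence $G'=-\omega^2\cot^2\omega\leq0$.

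It follows that $E(1-S)\geq G(S)\geq G(\tfrac12)$. As $\omega\to\frac{\pi}{2}$ we have $\omega\cot\omega\to0$, so $G(\tfrac12)=\frac{\pi^2}{4}\cdot\frac12\cdot\frac12=\frac{\pi^2}{16}$. This proves \eqref{eq:sharp-var}.

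\emph{Step 4 (sharpness and the main obstacle).} Sharpness is attained by $f(x)=\sin(\pi x/2)$, which has $S=\frac12$ and $E=\frac{\pi^2}{8}$, giving equality. The main obstacle is Step 3: justifying the envelope differentiation. If the monotonicity of $\sigma$ proves awkward, I would instead differentiate $G(\sigma(\omega))$ directly in $\omega$. The same algebra gives $\frac{d}{d\omega}G=-\omega^2\cot^2\omega\,\sigma'(\omega)$, which is $\leq0$ whenever $\sigma$ increases. This identifies the sign without ever inverting $\sigma$.
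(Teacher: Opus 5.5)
Your proof is correct, and it takes a genuinely different route from the paper's. For $\frac13\le S\le\frac12$ the paper uses the direct method to obtain a constrained minimizer and derives the Euler--Lagrange equation. It then picks the branch $\sin(tx)/\sin t$, $0\le t\le\frac{\pi}{2}$, by a nonnegativity/Sturm argument. Finally it shows that $E(t)(1-S(t))$ decreases by reducing the sign of its derivative to $t\tan t+\sin^2t>2t^2$, which it proves with Huygens' inequality.

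Your bound $E\ge\omega^2S+\omega\cot\omega$ holds for every admissible $f$. It removes the existence, regularity and branch-selection steps entirely; the paper only sketches the last of these. So your argument is more elementary and fully rigorous in $H^1$. The trade-off is that you must know the extremal family in advance, whereas the paper's route derives it.

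Equality holds in your bound for $f=\phi_\omega$. Hence your $G(\sigma(\omega))$ is literally the paper's $E(t)(1-S(t))$. This equality case also justifies the stationarity you assert in the envelope identity: $\omega$ maximizes $\tau\mapsto\tau^2\sigma(\omega)+\tau\cot\tau$.

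Your formula $\frac{d}{d\omega}G(\sigma(\omega))=-\omega^2\cot^2\omega\,\sigma'(\omega)$ shows that the paper's factor $2t^2\cos t-t\sin t-\sin^2t\cos t$ equals $-2t^2\sin^3t\,\sigma'(t)$. So the Huygens step and the monotonicity of $S(t)$ are the same fact. Your Step 4 fallback therefore does not avoid it; it only avoids inverting $\sigma$.

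The check you left open, $\sigma'>0$ on $(0,\frac{\pi}{2})$, follows from the paper's pointwise argument. For $0<x<1$,
$\partial_\omega\log\phi_\omega(x)=\omega^{-1}\bigl(\omega x\cot(\omega x)-\omega\cot\omega\bigr)>0$, because $z\cot z$ is strictly decreasing on $(0,\pi)$. Hence $\sigma'(\omega)=2\int_0^1\phi_\omega\,\partial_\omega\phi_\omega\,dx>0$, and $\omega(S)$ is smooth by the inverse function theorem.
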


\begin{proof}
The second assertion is the Dirichlet--Neumann Poincar\'e inequality on
$[0,1]$; no condition at $1$ is required.

For the first assertion, Cauchy's inequality gives $E:=\int f'^2\geq1$.
Thus \eqref{eq:sharp-var} is immediate when $S\leq1-\frac{\pi^2}{16}$; in
particular it is immediate for $S\leq\frac13$.  For fixed
$S\in[\frac13,\frac12]$, the direct method gives a minimizer of $E$ subject to the
two endpoint conditions and the $L^2$ constraint.  The Euler--Lagrange
equation and the endpoint data give
\[
 f_t(x)=\frac{\sin(tx)}{\sin t},\quad 0\leq t\leq\frac{\pi}{2}.
\]
Indeed, replacing a minimizer by its absolute value does not change either
constraint or the Dirichlet energy, so a minimizing Euler--Lagrange solution
may be taken nonnegative.  It therefore lies on the first Sturm branch.  The
hyperbolic branch corresponds to $S<\frac13$ and has already been covered.
On the trigonometric first branch, $f_t(x)$ increases pointwise with $t$ for
$0<x<1$: this follows because $z\cot z$ is strictly decreasing.  Thus
$S(t)$ increases from $\frac13$ to $\frac12$ as $t$ runs from $0$ to
$\frac{\pi}{2}$, which justifies the stated parameter range.
For the trigonometric branch,
\begin{align*}
 S(t)&=\frac{\frac12-\frac{\sin(2t)}{4t}}{\sin^2t},\\
 E(t)&=\frac{t^2\left(\frac12+\frac{\sin(2t)}{4t}\right)}{\sin^2t}.
\end{align*}
A differentiation shows that $E(t)(1-S(t))$ decreases on
$[0,\frac{\pi}{2}]$.  More precisely,
\begin{align*}
 \frac{d}{dt}\{E(t)(1-S(t))\}
 =\frac{\cos^2t}{2\sin^5t}
 \left(2t^2\cos t-t\sin t-\sin^2t\cos t\right).
\end{align*}
The required sign is therefore equivalent to
\[
 t\tan t+\sin^2t>2t^2\quad(0<t<\frac{\pi}{2}),
\]
and this follows cleanly from Huygens' inequality
\[
 2\sin t+\tan t>3t.
\]
Indeed, if $a=\frac{\sin t}{t}$ and $b=\frac{\tan t}{t}$, Huygens gives
$2a+b>3$, whence
\[
 b+a^2=(b+2a)+(a-1)^2-1>2.
\]
For completeness, Huygens' inequality itself follows by differentiating:
with $c=\cos t$,
\[
 \frac{d}{dt}(2\sin t+\tan t-3t)
 =\frac{(c-1)^2(2c+1)}{c^2}>0,
\]
while the expression vanishes at $t=0$.  Hence
\[
 E(t)(1-S(t))\geq
 E\left(\frac{\pi}{2}\right)
 \left(1-S\left(\frac{\pi}{2}\right)\right)
 =\frac{\pi^2}{16}.
\]
Equality is attained by $f(x)=\sin(\frac{\pi x}{2})$, proving sharpness.
\end{proof}

Combining Lemma~\ref{lem:variational} with $E<r^2$ gives the same constant
in both cases of \eqref{eq:c-opt}.  If $S\leq\frac12$, then
\[
 r^2(1-S)>E(1-S)\geq\frac{\pi^2}{16}.
\]
If $S\geq\frac12$, then $\frac{r^2}{S}>\frac{E}{S}\geq\frac{\pi^2}{4}$.  It follows from
\eqref{eq:ratio-main} that the coefficient of $(\log k)^2$ cannot exceed
$\frac{2\theta}{\pi^2}$ within this product family.  The next subsection shows that it
can be approached.

\subsection{Boundary-layer recovery of the endpoint slope}

The extremizer $\phi(x)=\sin(\frac{\pi x}{2})$ has
$\phi'(0)^2=\frac{\pi^2}{4}$, whereas
$\int_0^1\phi'^2=\frac{\pi^2}{8}$; thus it does not directly force the concentration
condition $E<r^2$ at the desired scale $r^2\downarrow E$.  A boundary layer
repairs this mismatch.

\begin{lemma}[Boundary-layer approximation]\label{lem:boundary}
For every $\varepsilon>0$ there exists $f\in C^2[0,1]$ such that
\[
 f(0)=0,\quad f(1)=1,\quad
 \int_0^1f'^2<f'(0)^2,\quad
 \int_0^1f^2<\frac12,
\]
and
\[
 f'(0)^2\left(1-\int_0^1f^2\right)
 <\frac{\pi^2}{16}+\varepsilon.
\]
The function can be chosen smooth after an arbitrarily small perturbation.
\end{lemma}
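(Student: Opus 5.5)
We perturb the first-branch extremizers $f_t(x)=\sin(tx)/\sin t$ from the proof of Lemma~\ref{lem:variational} in two stages. First, $t$ is pushed slightly below $\pi/2$, which makes $S<\frac12$ strict. Second, the function is modified only on a short interval $[0,h]$ near the origin, so that its slope at $0$ becomes any prescribed value $r$ with $r^2$ just above $E$. The point is that the required $r^2$ is close to $E\approx\pi^2/8$, which is \emph{smaller} than $\phi'(0)^2=\pi^2/4$. So the boundary layer must lower the endpoint slope, and this can be done at energy cost $O(h)$.

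\emph{Step 1 (choice of $t$).} By the formulas for $S(t)$ and $E(t)$ in the proof of Lemma~\ref{lem:variational}, both are continuous on $(0,\frac{\pi}{2}]$. Moreover $S(t)$ is strictly increasing with $S(\frac{\pi}{2})=\frac12$, and $E(\frac{\pi}{2})(1-S(\frac{\pi}{2}))=\frac{\pi^2}{16}$. Fix $t<\frac{\pi}{2}$ close enough to $\frac{\pi}{2}$ that $S(t)<\frac12$ and $E(t)(1-S(t))<\frac{\pi^2}{16}+\frac{\varepsilon}{3}$. Then pick $r>0$ with
\[
 E(t)<r^2\quad\text{and}\quad r^2(1-S(t))<\frac{\pi^2}{16}+\frac{2\varepsilon}{3}.
\]
Since $E(t)\approx\pi^2/8<t^2\cot^2(\cdot)$-type endpoint values, no sign issue arises: $r$ is simply a number slightly larger than $\sqrt{E(t)}$.

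\emph{Step 2 (boundary layer).} Fix $\chi\in C^\infty(\mathbb R)$ with $0\le\chi\le1$, $\chi=0$ on $(-\infty,\frac13]$ and $\chi=1$ on $[\frac23,\infty)$. For small $h>0$ define
\[
 f(x)=rx+\chi\left(\frac{x}{h}\right)\bigl(f_t(x)-rx\bigr).
\]
This $f$ has the following properties.
\begin{itemize}
\item[(i)] $f$ is $C^\infty$ on $[0,1]$.
\item[(ii)] $f(x)=rx$ near $0$, so $f(0)=0$ and $f'(0)=r$.
\item[(iii)] $f=f_t$ on $[h,1]$, so $f(1)=1$.
\end{itemize}
The key estimate is a uniform bound on $f'$ over $[0,h]$. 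We have
\[
 f'=r+\chi\cdot(f_t'-r)+h^{-1}\chi'(x/h)\,(f_t(x)-rx).
\]
Since $f_t(0)=0$, we get $|f_t(x)-rx|\le Cx\le Ch$ on $[0,h]$, so the last term is $O(1)$ uniformly in $h$. Hence $|f'|\le C'$ on $[0,h]$ with $C'$ independent of $h$, and similarly $|f|\le C'h$ there. It follows that
\[
 \int_0^1f'^2=E(t)+O(h),\qquad \int_0^1f^2=S(t)+O(h^3).
\]

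\emph{Step 3 (conclusion).} Choose $h$ small enough that $\int_0^1 f'^2<r^2$; this is possible because $E(t)<r^2$ is a strict inequality and the error is $O(h)$. Also require $\int_0^1 f^2<\frac12$, and
\[
 r^2\Bigl(1-\int_0^1 f^2\Bigr)<r^2(1-S(t))+O(h^3)<\frac{\pi^2}{16}+\varepsilon.
\]
All the required inequalities are strict and the errors are $O(h)$, so a sufficiently small $h$ works. The function is already smooth, so the final remark of the lemma is automatic.

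The only delicate point is the uniform bound on $f'$ in Step 2, namely that the term $h^{-1}\chi'$ does not blow up. This is exactly what the vanishing $f_t(0)-r\cdot0=0$ provides.
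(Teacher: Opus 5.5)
Your proof is correct and follows the same strategy as the paper. You fix $t<\frac{\pi}{2}$ near $\frac{\pi}{2}$, take $r^2$ just above $E_t$, and replace $\phi_t$ on a layer $[0,h]$ by a function with slope $r$ at $0$ and derivative bounded uniformly in $h$, so that both integrals move by only $O(h)$. The one difference is the splice: the paper's cubic Hermite interpolant is only $C^1$ at the junction and needs a later mollification, whereas your cutoff blend $rx+\chi(x/h)(f_t-rx)$ is $C^\infty$ outright, and your parameter order ($t$, then $r$, then $h$) is stated more cleanly.
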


\begin{proof}
Let
\[
 \phi_t(x)=\frac{\sin(tx)}{\sin t},\quad t<\frac{\pi}{2}.
\]
Set
\[
 E_t=\int_0^1\phi_t'(x)^2\,dx,
 \quad S_t=\int_0^1\phi_t(x)^2\,dx.
\]
Then $S_t\to\frac12$ and $E_t(1-S_t)\to\frac{\pi^2}{16}$ as
$t\uparrow\frac{\pi}{2}$.  Fix $t$ close to $\frac{\pi}{2}$ and choose $r_0$ with
$r_0^2>E_t$ and $r_0^2-E_t$ arbitrarily small.  On $[0,\delta]$, take the
cubic Hermite polynomial with value $0$ and derivative $r_0$ at $0$, and
with value $\phi_t(\delta)$ and derivative $\phi_t'(\delta)$ at $\delta$.
On $[\delta,1]$, retain $\phi_t$.

The rescaled Hermite formula shows that its first derivative is bounded by a
constant depending only on $r_0$ and $t$, uniformly for small $\delta$.
Consequently the boundary layer contributes $O_{r_0,t}(\delta)$ to both
$\int f'^2$ and $\int f^2$, and
\[
 \int_0^1f'^2\to E_t,\quad \int_0^1f^2\to S_t
 \quad(\delta\to0).
\]
For small $\delta$ the strict inequality $\int f'^2<r_0^2$ holds.  Now send
$\delta\to0$ while retaining $\int f^2<\frac12$, then send
$r_0^2\downarrow E_t$, and finally
$t\uparrow\frac{\pi}{2}$.  To obtain a smooth function, mollify only in a
shrinking neighborhood of the joining point $x=\delta$, leaving fixed
neighborhoods of $0$ and $1$ unchanged.  This preserves $f'(0)=r_0$ and the
two endpoint values, while convergence in $H^1$ preserves all three strict
inequalities for a sufficiently small mollification scale.
\end{proof}

\section{Proof of the main theorem}

We now prove Theorem~\ref{thm:main} using Proposition~\ref{prop:transfer}.  By
Lemma~\ref{lem:boundary}, choose and then fix $f$ so close to the sharp
constant that the optimized coefficient in \eqref{eq:ratio-main} is greater
than
\[
 \frac{2\theta}{\pi^2}-\delta_1
\]
for a prescribed $\delta_1>0$.  Next choose $k$ so that
\[
 \log k=\left(\frac{\pi}{\sqrt{2\theta}}+\frac{\varepsilon}{3}\right)
 \sqrt\rho+O(1).
\]
For sufficiently large $\rho$, and then sufficiently small $\delta_1$, the
right side of \eqref{eq:ratio-main} exceeds $\rho$.  Thus Sono's coefficient
\[
 \Ecal_{k,\eta}(F)-\frac{\rho\theta}{2}I_k(F)
\]
is positive.

For each fixed $k$, Sono's asymptotic as $N\to\infty$ then makes
$S(N,\rho)>0$.  Therefore infinitely many translates of the chosen
admissible $k$-tuple contain at least $\rho+1$ sifted $E_2$-numbers
\cite[(2.3) and (6.3)]{Sono}.  For completeness, take $h_1,\ldots,h_k$ to be
the first $k$ primes exceeding $k$.  If $p\leq k$, the residue class $0$
is omitted modulo $p$; if $p>k$, at most $k<p$ residue classes are occupied.
Thus the tuple is admissible, and the prime number theorem gives
$\operatorname{diam}(\mathcal H)=O(k\log k)$.  Hence
\[
 H_\rho(E_2)\ll k\log k.
\]
Since $\log(k\log k)=\log k+o(\sqrt\rho)$, the extra factor is absorbed by
the remaining $\frac{2\varepsilon}{3}$, proving the first assertion of
Theorem~\ref{thm:main}.

The classical Bombieri--Vinogradov theorem for primes and Motohashi's
corresponding input for the relevant $E_2$ convolution give every fixed
$\theta<\frac12$ in Sono's formulation.  Letting $\theta\uparrow\frac12$ after
fixing $\varepsilon$ yields the asserted constant $\pi$.

The order of choices in this argument is
\[
 \varepsilon\longrightarrow f\longrightarrow\rho
 \longrightarrow k,A,\eta
 \longrightarrow F_{\mathrm{smooth}},\delta
 \longrightarrow N\to\infty.
\]
In particular, $\eta$ and the smooth approximant are fixed when Sono's
finite-$N$ asymptotic is applied.  This completes the proof of
Theorem~\ref{thm:main}.

\end{document}